\newcommand{\vs}{\vspace{3mm}}
\newtheorem{theorem}{Theorem}
\newtheorem*{conjecture*}{Conjecture}
\newtheorem{lemma}{Lemma}
\theoremstyle{remark}
\newtheorem*{remark*}{Remark}
\tikzstyle{vertex}=[circle,draw=black,fill=black,inner sep=0,minimum size=3pt,text=white,font=\footnotesize]
\begin{document}

\begin{frontmatter}[classification=text]

\title{The Maximum Number of Triangles in a Graph of Given Maximum Degree} 

\author[zach]{Zachary Chase\thanks{The author is partially supported by Ben Green's Simons Investigator Grant 376201 and gratefully acknowledges the support of the Simons Foundation.}}

\begin{abstract}
We prove that any graph on $n$ vertices with max degree $d$ has at most $q{d+1 \choose 3}+{r \choose 3}$ triangles, where $n = q(d+1)+r$, $0 \le r \le d$. This resolves a conjecture of Gan-Loh-Sudakov.
\end{abstract}
\end{frontmatter}

\section{Introduction}
\indent \indent Fix positive integers $d$ and $n$ with $d+1 \le n \le 2d+1$. Galvin [7] conjectured that the maximum number of cliques in an $n$-vertex graph with maximum degree $d$ comes from a disjoint union $K_{d+1}\sqcup K_r$ of a clique on $d+1$ vertices and a clique on $r := n-d-1$ vertices. Cutler and Radcliffe [4] proved this conjecture. Engbers and Galvin [6] then conjectured that, for any fixed $t \ge 3$, the same graph $K_{d+1}\sqcup K_r$ maximizes the number of cliques of size $t$, over all $(d+1+r)$-vertex graphs with maximum degree $d$. Engbers and Galvin [6]; Alexander, Cutler, and Mink [1]; Law and McDiarmid [11]; and Alexander and Mink [2] all made progress on this conjecture before Gan, Loh, and Sudakov [9] resolved it in the affirmative. Gan, Loh, and Sudakov then extended the conjecture to arbitrary $n \ge 1$ (for any $d$). 

\begin{conjecture*}[Gan-Loh-Sudakov Conjecture] Any graph on $n$ vertices with maximum degree $d$ has at most $q{d+1 \choose 3}+{r \choose 3}$ triangles, where $n = q(d+1)+r$, $0 \le r \le d$.
\end{conjecture*}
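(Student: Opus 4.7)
The plan is to induct on $n$. The base case $n \le d+1$ is immediate since $G$ then has at most $\binom{n}{3}$ triangles. For the inductive step with $q \ge 1$, the goal is to locate a set $S \subseteq V(G)$ of size $d+1$ such that the number of triangles of $G$ meeting $S$ is at most $\binom{d+1}{3}$. Given such an $S$, the graph $G - S$ has $(q-1)(d+1) + r$ vertices and maximum degree at most $d$, so by induction the number of triangles in $G-S$ is at most $(q-1)\binom{d+1}{3} + \binom{r}{3}$, which combined with the $\binom{d+1}{3}$ bound on triangles meeting $S$ yields the theorem.

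After a preliminary reduction we may assume some vertex of $G$ has degree exactly $d$ (otherwise a separate argument handles graphs with max degree at most $d-1$). The natural candidate is then $S = N[v]$ for a degree-$d$ vertex $v$. As a warm-up, the special case $r = 0$ already works by removing just the single vertex $v$: indeed, $v$ lies in $e(G[N(v)]) \le \binom{d}{2}$ triangles, and Pascal's identity $\binom{d+1}{3} = \binom{d}{3} + \binom{d}{2}$ combines with the inductive bound on $G - v$ to close this case.

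The core difficulty is $r \ge 1$. One needs $f(N[v]) := |\{\text{triangles } T \text{ with } T\cap N[v]\ne\emptyset\}| \le \binom{d+1}{3}$ for some $v$ of degree $d$. Triangles through $v$ are automatically inside $N[v]$; the problematic contribution is triangles with one or two vertices in $N(v)$ and the rest outside $N[v]$. A uniform average of $f(N[v])$ over $v$ loses a constant factor and is insufficient, so a sharper argument is required. I would attempt either (i) a weighted averaging, choosing $v$ with weight proportional to $e(G[N(v)])$, on the intuition that denser neighborhoods leak fewer escaping triangles, or (ii) a local exchange argument: if $f(N[v]) > \binom{d+1}{3}$ for every degree-$d$ vertex $v$ in a triangle-maximizing $G$, exhibit a swap that deletes an edge leaving $N[v]$ and inserts a missing edge inside $N(v)$, strictly increasing the triangle count without violating the max-degree constraint---a contradiction. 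Strategy (ii) would have the attractive consequence that, in any extremal $G$, each max-degree neighborhood is a clique, hence $G$ is a disjoint union of cliques and the bound is immediate. The hardest step, in either strategy, is the fine control over common neighborhoods of pairs in $N(v)$ that lie outside $N[v]$; this is where I expect the main obstacle to lie.
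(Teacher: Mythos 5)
Your outline has the right inductive skeleton (peel off a closed neighborhood whose removal loses few triangles), but the entire content of the theorem is concentrated in the step you leave open: producing the set $S$. Neither of your two strategies is carried out, and both are aimed at a statement that is more rigid than what is needed and harder to prove. You insist that $S=N[v]$ for a vertex $v$ of degree exactly $d$, so that $|S|=d+1$ and the triangles meeting $S$ number at most ${d+1 \choose 3}$; you give no argument that such a $v$ exists, and you also need (but do not supply) the "separate argument" for graphs with no vertex of degree $d$, which amounts to an additional induction on $d$ together with a numerical comparison of the bounds for $d-1$ and $d$. The exchange argument in strategy (ii) is exactly the kind of local modification that earlier partial results (e.g.\ Gan's thesis, which only handles $r$ very close to $d$) struggled with; asserting that a triangle-increasing swap always exists is not a proof, and the "fine control over common neighborhoods" you flag as the obstacle is indeed the whole problem. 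As it stands, the proposal is a plan, not a proof.

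Moreover, your premise that "a uniform average of $f(N[v])$ over $v$ loses a constant factor and is insufficient" is where you part ways with the actual solution, and it is misleading. The paper proves the exact identity $6\sum_v |T_{N[v]}| + |W(G)| = \sum_v d(v)^3$, where $W(G)$ counts $4$-tuples $(x,u,v,w)$ with $ux,vx,wx\in E$ and $uv,uw,vw\notin E$ (repetitions allowed, so $|W(G)|\ge\sum_x d(x)$). This gives $\sum_v |T_{N[v]}| \le \sum_v \frac{d(v)^3-d(v)}{6} = \sum_v {d(v)+1 \choose 3}$, and pigeonhole yields a vertex $v$ (of \emph{arbitrary} degree) with $|T_{N[v]}| \le {d(v)+1 \choose 3}$. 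So plain averaging does suffice, once each vertex is measured against its own degree rather than against $d$. The resulting removed set has size $d(v)+1$, not $d+1$, and the bookkeeping in the induction is closed by the elementary convexity inequality ${a \choose 3}+{b \choose 3} \le {c \choose 3}+{a+b-c \choose 3}$ for $\max(a,b)\le c\le a+b$. That flexibility is precisely what lets one avoid both of your unproved reductions (the existence of a degree-$d$ vertex, and the bound for max-degree vertices specifically). To repair your write-up you would either have to prove your stronger max-degree statement --- which you have not, and which is not known to follow from any easy averaging --- or relax to the degree-sensitive bound and add the convexity step, at which point you have reproduced the paper's argument.
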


They showed their conjecture implies that, for any fixed $t \ge 4$, any max-degree $d$ graph on $n = q(d+1)+r$ vertices has at most $q{d+1 \choose t}+{r \choose t}$ cliques of size $t$. In other words, considering triangles is enough to resolve the general problem of cliques of any fixed size. 

\vs

The Gan-Loh-Sudakov conjecture (GLS conjecture) has attracted substantial attention. Cutler and Radcliffe [5] proved the conjecture for $d \le 6$ and showed that a minimal counterexample, in terms of number of vertices, must have $q = O(d)$. Gan [8] proved the conjecture if $d+1-\frac{9}{4096}d \le r \le d$ (there are some errors in his proof, but they can be mended). Using fourier analysis, the author [3] proved the conjecture for Cayley graphs with $q \ge 7$. Kirsch and Radcliffe [10] investigated a variant of the GLS conjecture in which the number of edges is fixed instead of the number of vertices (with still a maximum degree condition). 

\vs

In this paper, we fully resolve the Gan-Loh-Sudakov conjecture. 

\begin{theorem}
For any positive integers $n,d \ge 1$, any graph on $n$ vertices with maximum degree $d$ has at most $q{d+1 \choose 3}+{r \choose 3}$ triangles, where $n = q(d+1)+r$, $0 \le r \le d$.
\end{theorem}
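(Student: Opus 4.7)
The plan is to prove the theorem by strong induction on $n$. Write $f(n,d) := q\binom{d+1}{3} + \binom{r}{3}$ for the conjectured bound, where $n = q(d+1) + r$ with $0 \le r \le d$. The base cases $n \le d+1$ reduce to showing a graph on $n$ vertices has at most $\binom{n}{3}$ triangles, which is trivial.

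For the inductive step, given $G$ on $n$ vertices with $\Delta(G) \le d$, I would look for a vertex $v$ whose removal allows the induction hypothesis to close, i.e., a vertex with $t(v) \le f(n,d) - f(n-1,d)$, where $t(v) := |E(G[N(v)])|$ is the number of triangles through $v$. A direct calculation gives this increment as $\binom{r-1}{2}$ when $r \ge 1$ and as $\binom{d}{2}$ when $r = 0$. Two subcases are then immediate: when $r = 0$, any vertex satisfies $t(v) \le \binom{d}{2}$ by the degree bound; when $r \ge 1$ and $G$ has a vertex of degree at most $r-1$, that vertex automatically satisfies $t(v) \le \binom{r-1}{2}$. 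So the real work is in the regime $r \ge 1$ and $\delta(G) \ge r$.

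In this regime, a parallel inductive move is available: if $G$ contains a $K_{d+1}$ as a connected component, peel it off and apply induction to $G' = G \setminus K_{d+1}$, which has $(q-1)(d+1)+r$ vertices; the triangle loss is exactly $\binom{d+1}{3} = f(n,d)-f(n-(d+1),d)$, so induction closes. Failing that, I would try to locate a vertex $v$ whose neighborhood is "sparse enough" relative to $r$ by examining vertices of (near-)minimum degree, or alternatively pick a vertex $v$ of maximum degree and exploit the structure of $N(v)$: if $\{v\} \cup N(v)$ nearly forms a $K_{d+1}$, either detach it by a local edge-rearrangement that does not lose triangles, or derive a direct contradiction with the maximum-degree hypothesis.

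The main obstacle is precisely the stubborn sub-regime where $\delta(G) \ge r \ge 1$, there is no $K_{d+1}$ component, and every vertex has $t(v) > \binom{r-1}{2}$. Here the inductive budget is too small to afford removing any single vertex, and double-counting via $\sum_v t(v) = 3T$ is not sharp enough: even in the extremal graph the average $3T/n$ is a weighted mixture of $\binom{d}{2}$ and $\binom{r-1}{2}$ and strictly exceeds $\binom{r-1}{2}$. Overcoming this likely requires a compression or local-swap argument showing that any such $G$ can be transformed---without decreasing its triangle count---into one that either contains a $K_{d+1}$ component or acquires a vertex of degree less than $r$, at which point the easier cases take over. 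Controlling such a modification, and ensuring it preserves the max-degree constraint, is the technical heart of the proof.
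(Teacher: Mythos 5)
Your reductions (the case $r=0$, the case of a vertex of degree at most $r-1$, and peeling off a $K_{d+1}$ component) are all correct, but the argument is not a proof: the regime you yourself flag --- $r \ge 1$, $\delta(G) \ge r$, no $K_{d+1}$ component, and every vertex lying in more than ${r-1 \choose 2}$ triangles --- is exactly where the entire difficulty of the Gan--Loh--Sudakov conjecture sits, and the promised ``compression or local-swap argument'' is never supplied. Worse, the single-vertex-deletion framework cannot be rescued inside that regime: take $q=1$, $r=d$ (so $n=2d+1$) and let $G$ be the $d$-regular circulant on $\mathbb{Z}/(2d+1)$ with connection set an interval; every vertex lies in roughly $\tfrac{3}{4}{d \choose 2}$ triangles, far above the inductive budget $f(n,d)-f(n-1,d)={r-1 \choose 2}$, there is no low-degree vertex, and no clique component to peel. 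So no choice of a single vertex can close the induction there, and local swaps that preserve the triangle count and the degree cap are precisely what earlier partial attempts (Cutler--Radcliffe, Gan) struggled with; the missing step is the theorem, not a technicality.

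The paper closes this gap by changing the unit of deletion: instead of one vertex it removes an entire closed neighborhood $N[v]$, i.e.\ $d(v)+1$ vertices at once, which (after a simple convexity lemma, ${a \choose 3}+{b \choose 3} \le {a+1 \choose 3}+{b-1 \choose 3}$ for $a \ge b \ge 1$, used to reorganize $q{d+1 \choose 3}+{r \choose 3}$) gives the much larger budget ${d(v)+1 \choose 3}$ per step. The key lemma is that in any graph some vertex $v$ satisfies $|T_{N[v]}| \le {d(v)+1 \choose 3}$, where $|T_{N[v]}|$ counts all triangles meeting $N[v]$; it follows by pigeonhole from the exact counting identity $6\sum_v |T_{N[v]}| + |W(G)| = \sum_v d(v)^3$, where $W(G)$ is the set of ordered $4$-tuples $(x,u,v,w)$ with $x$ adjacent to $u,v,w$ and $u,v,w$ pairwise non-adjacent (repetitions allowed), together with the trivial bound $|W(G)| \ge \sum_x d(x)$, which yields $\sum_v |T_{N[v]}| \le \sum_v \tfrac{1}{6}\bigl[d(v)^3-d(v)\bigr]$. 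In short, the averaging that you correctly observe fails for triangles through a single vertex does succeed for triangles meeting a closed neighborhood, and that is the idea your proposal is missing.
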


Analyzing the proof shows that $qK_{d+1}\sqcup K_r$ is the unique extremal graph if $r \ge 3$, and that $qK_{d+1}\sqcup H$, for any $H$ on $r$ vertices, are the extremal graphs if $0 \le r \le 2$. 

\vs

The heart of the proof is the following Lemma, of independent interest, which says that, in any graph, we can find a closed neighborhood whose removal from the graph removes few triangles. Theorem 1 will follow from its repeated application.

\begin{lemma}
In any graph $G$, there is a vertex $v$ whose closed neighborhood meets at most ${d(v)+1 \choose 3}$ triangles.
\end{lemma}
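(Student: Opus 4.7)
The plan is to pick an extremal vertex and execute a careful double-counting argument. My first choice would be to let $v$ be a vertex maximizing $T(v)$, the number of triangles through $v$ (equivalently, the number of edges in the induced subgraph $G[N(v)]$).

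First I would partition the triangles meeting $N[v]$ by the number $k$ of their vertices inside $N[v]$, writing $M(v) = m_1 + m_2 + m_3$. Two identities form the backbone of the argument: the triangles strictly inside $N[v]$ satisfy $m_3 = T(v) + t(G[N(v)])$ (they either contain $v$ or live inside $G[N(v)]$), and $\sum_{u \in N[v]} T(u) = m_1 + 2m_2 + 3m_3$ (each triangle meeting $N[v]$ in $k$ vertices is counted exactly $k$ times). Maximality of $T(v)$ gives $\sum_{u \in N[v]} T(u) \le (d(v)+1)\,T(v)$, and combining with the identities yields $m_1 + 2m_2 \le (d(v)-2)\,T(v) - 3\,t(G[N(v)])$, hence
\[ M(v) \;\le\; (d(v)-1)\,T(v) - 2\,t(G[N(v)]). \]

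The main obstacle, and the step I expect to require the most care, is converting this into $M(v) \le \binom{d(v)+1}{3}$. The final inequality $(d(v)-1)\,T(v) - 2\,t(G[N(v)]) \le \binom{d(v)+1}{3}$ is tight when $N[v]$ is a clique (so that $T(v) = \binom{d(v)}{2}$ and $t(G[N(v)]) = \binom{d(v)}{3}$), but simple checks show that for certain nearly-complete neighborhoods the left side can exceed the right. The slack in the argument is the quantity $\sum_{u \in N[v]} (T(v) - T(u))$, and the hope is that whenever the bound overshoots $\binom{d(v)+1}{3}$, this slack exactly compensates the overshoot.

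The hard part will be making this "slack absorption" rigorous. My likely approach is either a fine structural argument linking missing edges of $G[N(v)]$ to vertices $u \in N(v)$ with $T(u) < T(v)$, or to refine the extremal criterion for $v$ --- for example, by maximizing $T(v)$ over vertices of maximum degree, or by maximizing a weighted invariant like $T(v) + \alpha\, t(G[N(v)])$ for a suitable $\alpha$. As a backup, I would try a contradiction argument: assume $M(v) > \binom{d(v)+1}{3}$ for the maximizer of $T$, and use the excess sticking-out triangles, via some edge- or path-exchange, to produce a vertex $v'$ with $T(v') > T(v)$, violating maximality.
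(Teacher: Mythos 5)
Your proposal does not complete the proof; it stops exactly at the step that carries all the difficulty. The double-counting part is fine: the identities $m_3 = T(v) + t(G[N(v)])$ and $\sum_{u \in N[v]} T(u) = m_1 + 2m_2 + 3m_3$, together with maximality of $T(v)$, do give $M(v) \le (d(v)-1)T(v) - 2t(G[N(v)])$. But, as you yourself note, this bound can exceed $\binom{d(v)+1}{3}$: for instance if $G[N(v)]$ is a $4$-cycle (so $d(v)=4$, $T(v)=4$, $t(G[N(v)])=0$) the bound is $12$, while $\binom{5}{3}=10$. So the argument as written does not prove the lemma for the max-$T(v)$ vertex, and the proposed remedies (``slack absorption,'' refining the extremal criterion, or an exchange argument producing $v'$ with $T(v')>T(v)$) are only named, not carried out. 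It is not even clear that the vertex maximizing $T(v)$ satisfies the conclusion at all, so the missing step is not a routine verification but the heart of the matter.

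The paper sidesteps the problem of choosing a specific vertex. It proves a global identity (Lemma 2): with $W(G) = \{(x,u,v,w) : ux,vx,wx \in E,\ uv,uw,vw \notin E\}$, one has $6\sum_v |T_{N[v]}| + |W(G)| = \sum_v d(v)^3$, verified by classifying the induced subgraphs arising from the relevant ordered $4$-tuples. Since $|W(G)| \ge \sum_v d(v)$ (already from the degenerate tuples $(x,u,u,u)$), this yields $\sum_v |T_{N[v]}| \le \sum_v \frac{1}{6}\bigl[d(v)^3 - d(v)\bigr] = \sum_v \binom{d(v)+1}{3}$, and a termwise pigeonhole then produces some $v$ with $|T_{N[v]}| \le \binom{d(v)+1}{3}$. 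In other words, the statement is proved ``on average'' with the vertex-dependent weight $\binom{d(v)+1}{3}$, which is exactly what your local extremal-vertex approach would have to recover by hand. If you want to salvage your route, you would need either to exhibit the slack-absorption mechanism explicitly or to switch to a global counting argument of this kind.
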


\vspace{-1mm}

As mentioned above, Theorem 1, together with the work of Gan, Loh, and Sudakov [9], yields the general result, for cliques of any fixed size.

\begin{theorem}
Fix $t \ge 3$. For any positive integers $n,d \ge 1$, any graph on $n$ vertices with maximum degree $d$ has at most $q{d+1 \choose t}+{r \choose t}$ cliques of size $t$, where $n = q(d+1)+r$, $0 \le r \le d$. 
\end{theorem}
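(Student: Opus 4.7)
The plan is to derive Theorem 2 from Theorem 1 together with the reduction established by Gan, Loh, and Sudakov [9]. As mentioned in the introduction, [9] showed that the triangle version of the conjecture implies its analog for cliques of any fixed size $t \ge 4$. Granting that reduction, Theorem 2 follows immediately from Theorem 1.

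To sketch what the reduction itself should look like, I would argue by induction on $n$, with $t \ge 4$ and $d$ fixed. The base case $n \le d+1$ is immediate, since any graph on $n$ vertices has at most $\binom{n}{t}$ cliques of size $t$ and this matches the claimed bound (for $n \le d$ we have $q = 0$ and $r = n$; for $n = d+1$ we have $q = 1$ and $r = 0$). For the inductive step, the strategy is to locate a vertex $v$ whose closed neighborhood $N[v]$ meets at most $\binom{d(v)+1}{t}$ many $t$-cliques, and then apply the induction hypothesis to $G - N[v]$, which has $n - d(v) - 1$ vertices and maximum degree at most $d$. A short case analysis, splitting on whether $r \ge d(v)+1$ or $r < d(v)+1$, combined with the convexity of $x \mapsto \binom{x}{t}$ (which implies $\binom{a}{t} + \binom{b}{t}$ is maximized, for fixed sum $a+b$, by pushing the arguments to the endpoints of the feasible interval), then yields the desired bound $q\binom{d+1}{t} + \binom{r}{t}$.

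The main obstacle is establishing the $t$-clique analog of Lemma 1: in any graph $G$ with maximum degree $d$ there exists a vertex $v$ whose closed neighborhood meets at most $\binom{d(v)+1}{t}$ many $t$-cliques. One route is to check that the proof of Lemma 1 adapts line-by-line, with triangles replaced by $t$-cliques throughout. An alternative is to start from the vertex $v$ provided by Lemma 1 and bootstrap the triangle bound to a $t$-clique bound, using the fact that any $t$-clique meeting $N[v]$ in exactly $k \ge 1$ vertices contains exactly $\binom{t}{3} - \binom{t-k}{3}$ triangles meeting $N[v]$, together with the elementary bound that each triangle lies in at most $\binom{d-2}{t-3}$ $t$-cliques. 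Once this analog of Lemma 1 is available, the induction on $n$ closes routinely and Theorem 2 is proved.
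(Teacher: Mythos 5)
Your main argument is exactly the paper's: the paper proves Theorem 2 by combining Theorem 1 with the reduction of Gan, Loh, and Sudakov [9], who showed that the triangle case of the conjecture implies the case of $t$-cliques for every fixed $t\ge 4$; granting that citation, your proposal is correct and takes the same route. One caution about your supplementary sketch of the reduction, which you should not lean on: the ``bootstrap'' from Lemma 1 does not work as stated. A $t$-clique meeting $N[v]$ in exactly $k\ge 1$ vertices contains at least $\binom{t-1}{2}$ triangles meeting $N[v]$, and each triangle lies in at most $\binom{d-2}{t-3}$ $t$-cliques, so double counting only gives that at most $\binom{d(v)+1}{3}\binom{d-2}{t-3}/\binom{t-1}{2}$ $t$-cliques meet $N[v]$; for $d(v)=d$ this equals $\frac{t}{3}\binom{d+1}{t}$, a factor $t/3$ worse than the bound $\binom{d(v)+1}{t}$ you need, so the induction would not close. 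Likewise, the claim that Lemma 1 adapts ``line-by-line'' is not justified: its proof rests on the four-tuple identity of Lemma 2, which is specific to triangles, and no $t$-clique analogue is established in the paper (the actual reduction in [9] is organized differently and does not require such a localized $t$-clique lemma). Since you explicitly invoke [9] for the reduction rather than relying on the sketch, this does not affect the validity of your proposal, but the sketch itself should be regarded as incomplete.
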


Theorem 2 gives another proof of (the generalization of) Galvin's conjecture (to $n \ge 2d+2$) that a disjoint union of cliques maximizes the total number of cliques in a graph with prescribed number of vertices and maximum degree. 

\vs

Finally, the author would like to point out a connection to a related problem, that of determining the minimum number of triangles that a graph of fixed number of vertices $n$ and prescribed minimum degree $\delta$ can have. The connection stems from a relation, observed in [2] and [9], between the number of triangles in a graph and the number of triangles in its complement: $$|T(G)|+|T(G^c)| = {n \choose 3}-\frac{1}{2}\sum_v d(v)[n-1-d(v)].$$ Lo [12] resolved this ``dual" problem when $\delta \le \frac{4n}{5}$. His results resolve the GLS conjecture for regular graphs for $q=2,3$, and the GLS conjecture implies his results, up to an additive factor of $O(\delta^2)$, for $q=2,3$, and yields an extension of his results for $q \ge 4$ --- these are the optimal results asymptotically, in the natural regime of $\frac{\delta}{n}$ fixed, and $n \to \infty$. 

\section{Notation}

Denote by $E$ the edge set of $G$; for two vertices $u,v$, we write ``$uv \in E$" if there is an edge between $u$ and $v$ and ``$uv \not \in E$" otherwise --- in particular, for any $u$, $uu \not \in E$. For a vertex $v$, let $|T_{N[v]}|$ denote the number of triangles with at least one vertex in the closed neighborhood $N[v] := \{u : uv \in E\}\cup\{v\}$, and let $|T(G-N[v])|$ denote the number of triangles with all vertices in the graph $G-N[v]$ (the subgraph induced by the vertices not in $N[v]$). Finally, $d(v)$ denotes the degree of $v$. 

\section{Proof of Theorem 1}

\noindent For a graph $G$, let $W(G) = \{(x,u,v,w) : ux,vx,wx \in E, uv,uw,vw \not \in E\}$.

\begin{lemma}
For any graph $G$, $6\sum_v |T_{N[v]}| + |W(G)| = \sum_v d(v)^3$. 
\end{lemma}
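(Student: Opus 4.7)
The plan is to expand both sides of the identity in terms of standard triangle invariants and verify they agree. Write $t(x)$ for the number of triangles through a vertex $x$ (equivalently, the number of edges in the subgraph induced on $N(x)$), $t(e)$ for the number of triangles through an edge $e$, and $|K_4(G)|$ for the number of $4$-cliques in $G$. I will show that each of $\sum_v d(v)^3 - |W(G)|$ and $6\sum_v |T_{N[v]}|$ equals $6\sum_v d(v)\,t(v) - 6\sum_e t(e)^2 + 24|K_4(G)|$, which gives the lemma.

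For the first, observe that $\sum_v d(v)^3$ counts ordered $4$-tuples $(x,u,v,w)$ with $xu,xv,xw \in E$, and $|W(G)|$ counts those additionally satisfying $uv,uw,vw \notin E$; hence the difference counts such tuples with at least one of $uv,uw,vw$ an edge. Expanding this condition by inclusion--exclusion over the three pairs $\{u,v\},\{u,w\},\{v,w\}$ and invoking symmetry, a single-edge term like ``$uv \in E$'' contributes $\sum_x d(x)\cdot 2t(x) = 2\sum_x d(x)\,t(x)$ (the factor $d(x)$ coming from the free coordinate $w$, and $2t(x)$ counting ordered edges inside $N(x)$); a two-edge term like ``$uv,uw \in E$'' contributes $\sum_{(x,u): xu\in E}|N(x)\cap N(u)|^2 = 2\sum_e t(e)^2$; and the three-edge condition forces $\{x,u,v,w\}$ to be a $K_4$, giving $4!\,|K_4(G)|$. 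Combining with the multiplicity factors $\binom{3}{1}=3$ and $\binom{3}{2}=3$ produces the desired expression.

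For the second, switch the order of summation to get $\sum_v |T_{N[v]}| = \sum_T |N[a]\cup N[b]\cup N[c]|$ for $T=\{a,b,c\}$. For any triangle each vertex already lies in the open neighborhood of the other two, so $N[a]\cup N[b]\cup N[c] = N(a)\cup N(b)\cup N(c)$; applying inclusion--exclusion and the standard double-counting identities $\sum_T(d(a)+d(b)+d(c)) = \sum_v d(v)\,t(v)$, $\sum_T\sum_{e\in T} t(e) = \sum_e t(e)^2$, and $\sum_T|N(a)\cap N(b)\cap N(c)| = 4|K_4(G)|$ (each $K_4$ supplies four triangle--apex pairs) yields $\sum_v |T_{N[v]}| = \sum_v d(v)\,t(v) - \sum_e t(e)^2 + 4|K_4(G)|$. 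Multiplying by six matches the previous expression exactly, proving the lemma. The main obstacle is not conceptual but bookkeeping: carefully tracking ordered-versus-unordered counts and inclusion--exclusion signs in both expansions.
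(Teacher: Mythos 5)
Your proof is correct: both expansions check out, including the subtle points that repeated coordinates are allowed among $u,v,w$ but are automatically excluded once an edge condition is imposed, that each pair of the three edge conditions shares a vertex (so all three two-edge terms equal $2\sum_e t(e)^2$), and that $N[a]\cup N[b]\cup N[c]=N(a)\cup N(b)\cup N(c)$ for a triangle $\{a,b,c\}$. Your route is genuinely different from the paper's. The paper introduces the ordered-tuple sets $\Sigma=\{(x,u,v,w):xu,xv,xw\in E\}$ and $\Omega$ (ordered triangles together with a vertex of their closed-neighborhood cover), observes $|\Sigma|=\sum_v d(v)^3$ and $|\Omega|=6\sum_v|T_{N[v]}|$, and then proves $|\Sigma|=|\Omega|+|W(G)|$ by classifying the induced subgraph spanned by each tuple into seven small types and checking, type by type, that the $\Sigma$-multiplicity equals the sum of the $\Omega$- and $W$-multiplicities (e.g.\ the factor $18$ for a triangle, $24$ for a $K_4$). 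You instead reduce both $\sum_v d(v)^3-|W(G)|$ and $6\sum_v|T_{N[v]}|$, via inclusion--exclusion, to the common expression $6\sum_v d(v)t(v)-6\sum_e t(e)^2+24|K_4(G)|$; your switching step $\sum_v|T_{N[v]}|=\sum_T|N[a]\cup N[b]\cup N[c]|$ plays the role of the paper's identity $|\Omega|=6\sum_v|T_{N[v]}|$. What each buys: the paper's case catalog avoids signs entirely and is easy to audit one picture at a time, while your version avoids the catalog, makes the identity transparent as a relation among the standard invariants $\sum_v d(v)t(v)$, $\sum_e t(e)^2$, $|K_4(G)|$, and generalizes mechanically, at the cost of more careful ordered-versus-unordered and sign bookkeeping.
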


\begin{proof}
Let $\Omega = \{(z,u,v,w) : uv,uw,vw \in E \text{ and } [zu \in E \text{ or } zv \in E \text{ or } zw \in E]\}$, $\Sigma = \{(x,u,v,w) : ux,vx,wx \in E\}$, and $W = W(G)$. Note that repeated vertices in the $4$-tuples are allowed. First observe that, since there are $6$ ways to order the vertices of a triangle, $\sum_v 6|T_{N[v]}| = |\Omega|$. Any 4-tuple in $\Sigma,W,$ or $\Omega$ gives rise to one of the induced subgraphs shown below, since one vertex must be adjacent to all the others. 

\vspace{4.5mm}

\hspace{5mm} \begin{tikzpicture}


\node[vertex] (A1) at (0,0) {};
\node[vertex] (A2) at (0,1) {};
\draw[thick] (A1) -- (A2);

\node[vertex] (B1) at (2,0) {};
\node[vertex] (B2) at (2.5,1) {};
\node[vertex] (B3) at (1.5,1) {};
\draw[thick] (B2) -- (B1) -- (B3);

\node[vertex] (C1) at (4,0) {};
\node[vertex] (C2) at (4.5,1) {};
\node[vertex] (C3) at (3.5,1) {};
\draw[thick] (C2) -- (C1) -- (C3) -- (C2);

\node[vertex] (D1) at (6,0) {};
\node[vertex] (D2) at (6.5,.5) {};
\node[vertex] (D3) at (5.5,.5) {};
\node[vertex] (D4) at (6,1) {};
\draw[thick] (D2) -- (D1) -- (D3) -- (D1) -- (D4);

\node[vertex] (E1) at (8,0) {};
\node[vertex] (E2) at (8.5,.5) {};
\node[vertex] (E3) at (7.5,.5) {};
\node[vertex] (E4) at (8,1) {};
\draw[thick] (E2) -- (E1) -- (E3) -- (E4) -- (E1);

\node[vertex] (F1) at (10,0) {};
\node[vertex] (F2) at (10.5,.5) {};
\node[vertex] (F3) at (9.5,.5) {};
\node[vertex] (F4) at (10,1) {};
\draw[thick] (F1) -- (F2) -- (F4) -- (F3) -- (F1) -- (F4);

\node[vertex] (G1) at (12,0) {};
\node[vertex] (G2) at (12.5,.5) {};
\node[vertex] (G3) at (11.5,.5) {};
\node[vertex] (G4) at (12,1) {};
\draw[thick] (G1) -- (G2) -- (G3) -- (G4) -- (G1) -- (G3) -- (G4) -- (G2);


\node at (0,-.5) {$A$};
\node at (2,-.5) {$B$};
\node at (4,-.5) {$C$};
\node at (6,-.5) {$D$};
\node at (8,-.5) {$F$};
\node at (10,-.5) {$H$};
\node at (12,-.5) {$I$};

\end{tikzpicture}

\vspace{1.5mm}

Since $|\Sigma| = \sum_v d(v)^3$, it thus suffices to show that for each of the induced subgraphs above, the number of times it comes from a 4-tuple in $\Sigma$ is the sum of the number of times it comes from 4-tuples in $\Omega$ and $W$. Any fixed copy of $A$, say on vertices $u$ and $v$, comes $0$ times from a 4-tuple in $\Omega$ (since it has no triangles), and $2$ times from each of $W$ and $\Sigma$ ($(u,v,v,v),(v,u,u,u)$). Any fixed copy of $B$, say on vertices $u,v,w$ with $vu,vw \in E$, comes $0$ times from $\Omega$, and $6$ times from each of $W$ and $\Sigma$ ($(v,u,u,w),(v,u,w,u),(v,u,w,w),(v,w,u,u),(v,w,u,w),(v,w,w,u)$). Any fixed copy of $C$ comes $18$ times from each of $\Omega$ and $\Sigma$ (3 choices for the first vertex and then $6$ for the ordered triangle), and $0$ times from $W$. Similarly, any fixed copy of $D$ comes 6 times from each of $W$ and $\Sigma$, and $0$ times from $\Omega$; finally, $F,H,I$ come $6,12,24$ times, respectively, from each of $\Omega$ and $\Sigma$, and $0$ times from $W$. 
\end{proof}

We now prove our key lemma, previously mentioned in the introduction.

\setcounter{lemma}{0}
\begin{lemma}
In any graph $G$, there is a vertex $v$ whose closed neighborhood meets at most ${d(v)+1 \choose 3}$ triangles, i.e. $|T_{N[v]}| \le {d(v)+1 \choose 3}$. 
\end{lemma}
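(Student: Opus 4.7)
The plan is to derive the lemma from Lemma 1 by a simple averaging argument, with the only extra input a short lower bound on $|W(G)|$.

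First I would record the algebraic identity $6{d(v)+1 \choose 3} = (d(v)+1)d(v)(d(v)-1) = d(v)^3 - d(v)$ and sum it over all vertices $v$, which gives $6\sum_v{d(v)+1 \choose 3} = \sum_v d(v)^3 - 2|E|$. Subtracting this from the identity of Lemma 1 would produce
\[
6\sum_v \Big(|T_{N[v]}| - {d(v)+1 \choose 3}\Big) \;=\; 2|E| - |W(G)|.
\]
Once I have this display, the lemma reduces to showing that the right-hand side is nonpositive: if $|W(G)| \ge 2|E|$, then by averaging some vertex $v$ must satisfy $|T_{N[v]}| \le {d(v)+1 \choose 3}$, which is exactly the assertion.

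The step I expected to be the main obstacle is the inequality $|W(G)| \ge 2|E|$, but inspecting the definition of $W(G)$ shows it is essentially immediate: for every ordered pair $(x,u)$ with $u \in N(x)$, the 4-tuple $(x,u,u,u)$ lies in $W(G)$, because the three edge conditions $xu \in E$ all hold and the three non-edge conditions all read $uu \notin E$, which holds trivially in a loopless graph. These $\sum_x d(x) = 2|E|$ 4-tuples are pairwise distinct, so $|W(G)| \ge 2|E|$, which closes the argument. The equality case $|W(G)| = 2|E|$ corresponds, as expected, to the disjoint union of cliques (every neighborhood being a clique), where both sides of the lemma are actually equal at every vertex.
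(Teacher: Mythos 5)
Your argument is correct and is essentially the paper's own proof: both rest on the identity $6\sum_v |T_{N[v]}| + |W(G)| = \sum_v d(v)^3$, lower-bound $|W(G)|$ by the $\sum_x d(x) = 2|E|$ degenerate tuples $(x,u,u,u)$ with $ux \in E$, and conclude by averaging that some $v$ has $|T_{N[v]}| \le \frac{1}{6}[d(v)^3 - d(v)] = {d(v)+1 \choose 3}$. Your rearrangement of the algebra and the remark on the equality case are harmless cosmetic additions.
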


\begin{proof}
By Lemma 2, since $|W(G)| \ge |\{(x,u,u,u) : ux \in E\}| = \sum_x d(x)$, we have $\sum_v |T_{N[v]}| \le \sum_v \frac{1}{6}[d(v)^3-d(v)]$. By the pigeonhole principle, there is some $v$ with $$|T_{N[v]}| \le \frac{1}{6}[d(v)^3-d(v)] = {d(v)+1 \choose 3}.$$
\end{proof}

\setcounter{lemma}{2}
\begin{lemma}
For any positive integers $a \ge b \ge 1$, it holds that ${a \choose 3}+{b \choose 3} \le {a+1 \choose 3}+{b-1 \choose 3}$. Consequently, for any positive integers $a,b$ and any positive integer $c$ with $\max(a,b) \le c \le a+b$, it holds that ${a \choose 3}+{b \choose 3} \le {c \choose 3}+{a+b-c \choose 3}$.
\end{lemma}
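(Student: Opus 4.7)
The plan is to prove the first inequality by a one-line application of Pascal's rule, and then bootstrap the second inequality from the first by repeated application.

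For the first inequality, I would rewrite the claim as
\[
\binom{a+1}{3}-\binom{a}{3} \;\ge\; \binom{b}{3}-\binom{b-1}{3}.
\]
By Pascal's rule, $\binom{a+1}{3}-\binom{a}{3}=\binom{a}{2}$ and $\binom{b}{3}-\binom{b-1}{3}=\binom{b-1}{2}$, so the inequality reduces to $\binom{a}{2}\ge\binom{b-1}{2}$. Since $a\ge b\ge 1$ gives $a\ge b-1\ge 0$ and $\binom{\cdot}{2}$ is nondecreasing on nonnegative integers, this is immediate. The edge case $b=1$ is fine because $\binom{0}{3}=\binom{0}{2}=0$.

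For the consequence, by symmetry I may assume $a\ge b$, so the hypothesis becomes $a\le c\le a+b$. I would induct on $c-a$. The base case $c=a$ is trivial. For the inductive step, suppose the inequality holds at $(c,a+b-c)$ with $c\ge a\ge b\ge a+b-c$; apply the first inequality with the pair $(c,a+b-c)$ in place of $(a,b)$ (this is legal because $c\ge a+b-c$) to get
\[
\binom{c}{3}+\binom{a+b-c}{3}\;\le\;\binom{c+1}{3}+\binom{a+b-c-1}{3},
\]
which advances $c$ to $c+1$. Iterating until we exhaust $c\le a+b$ yields the claim.

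There is no real obstacle here: the whole argument is a Pascal-rule identity plus a monotonicity observation, with induction used only to upgrade a single-step exchange to an arbitrary one. The only thing to watch is the ordering hypothesis $a\ge b$ in the first inequality, which is why the induction must always apply the lemma in the direction that keeps the first coordinate the larger of the two.
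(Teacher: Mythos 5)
Your proof is correct and is essentially the paper's: the paper also notes $\binom{a+1}{3}-\binom{a}{3}=\binom{a}{2}$ and $\binom{b}{3}-\binom{b-1}{3}=\binom{b-1}{2}$ and then iterates the single-step exchange to obtain the consequence. You simply spell out the monotonicity comparison $\binom{a}{2}\ge\binom{b-1}{2}$ and the induction bookkeeping that the paper leaves implicit.
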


\begin{proof}
${a+1 \choose 3}-{a \choose 3} = {a \choose 2}$, and ${b \choose 3}-{b-1 \choose 3} = {b-1 \choose 2}$. Iterate to get the consequence.
\end{proof}

\vs

\noindent We now finish the proof of Theorem 1. 

\begin{proof}[Proof of Theorem 1]
With a fixed $d$, we induct on $n$. For $n=1$, the result is obvious. Take some $n \ge 2$, and suppose the theorem holds for all smaller values of $n$. Let $G$ be a max-degree $d$ graph on $n$ vertices. By Lemma $1$, we may take $v$ with $|T_{N[v]}| \le {d(v)+1 \choose 3}$. Write $n = q(d+1)+r$ for $0 \le r \le d$. Note $|T(G)| = |T(G-N[v])|+|T_{N[v]}|$. Since $G-N[v]$ has maximum degree (at most) $d$, if $d(v)+1 \le r$, then induction and Lemma 3 give $$|T(G)| \le q{d+1 \choose 3}+{r-(d(v)+1) \choose 3}+{d(v)+1 \choose 3} \le q{d+1 \choose 3}+{r \choose 3},$$ and if $d(v)+1 > r$, then induction and Lemma 3 give $$|T(G)| \le (q-1){d+1 \choose 3}+{d+1+r-(d(v)+1) \choose 3}+{d(v)+1 \choose 3} \le q{d+1 \choose 3}+{r \choose 3}.$$ The maximum degree condition ensured $d+1+r-(d(v)+1) \ge 0$ and $d(v)+1 \le d+1$.
\end{proof}

\section*{Acknowledgments} 
I would like to thank Po-Shen Loh for telling me the Gan-Loh-Sudakov conjecture and my advisor Ben Green for encouragement. I also thank Daniel Korandi for a cleaner proof of Lemma 2 and for helpful suggestions on the paper's presentation.

\bibliographystyle{amsplain}


\begin{aicauthors}
\begin{authorinfo}[zach]
  Zachary Chase\\
  University of Oxford\\
  Oxford, United Kingdom\\
  zachary\imagedot{}chase\imageat{}maths\imagedot{}ox\imagedot{}uk\imagedot{}edu \\
  \url{http://people.maths.ox.ac.uk/~chase/}
\end{authorinfo}
\end{aicauthors}

\end{document}